\newtheorem{theorem}{Theorem}[section]
\newtheorem{corollary}[theorem]{Corollary}
\theoremstyle{definition}
\newtheorem{definition}[theorem]{Definition}
\newtheorem{example}[theorem]{Example}
\theoremstyle{remark}
\newtheorem{remark}[theorem]{Remark}
\numberwithin{equation}{section}
\newcommand{\F}{\mathbb{F}}
\begin{document}
\title[Self-dual codes via a Baumert-Hall array]{New extremal binary
self-dual codes from a Baumert-Hall array}
\author{Abidin Kaya}
\address{Sampoerna Academy, L'Avenue Campus, 12780, Jakarta, Indonesia}
\email{abidin.kaya@sampoernaacademy.sch.id}
\author{Bahattin Yildiz}
\address{Department of Mathematics and Statistics, Northern Arizona
University, Flagstaff, AZ 86001, USA}
\email{bahattin.yildiz@nau.edu} \subjclass[2010]{Primary 94B05,
94B99; Secondary 11T71, 13M99} \keywords{extremal self-dual codes,
codes over rings, Gray maps, Baumert-Hall array, Baumert-Hall array,
extension theorems}

\begin{abstract}
In this work, we introduce new construction methods for self-dual codes using a Baumert-Hall array. We apply the constructions over the alphabets $\mathbb{F}_{2}$ and $%
\mathbb{F}_{4}+u\mathbb{F}_{4}$ and combine them with extension
theorems and neighboring constructions. As a result, we construct 46
new extremal binary self-dual codes of length 68, 26 new best known
Type II codes of length 72 and 8 new extremal Type II codes of
length 80 that lead to new $3-(80,16,665)$ designs. Among the new
codes of length 68 are the examples of codes with the rare
$\gamma=5$ parameter in $W_{68,2}$. All these new codes are
tabulated in the paper.
\end{abstract}

\maketitle

\section{Introduction}

Finding extremal binary self-dual codes with new weight enumerators
has been a topic of considerable interest in the Coding Theory
community for decades now. There are motivating factors for this
interest that come, in part, from the connection of self-dual codes
to structures such as designs, lattices and invariant polynomials.
The Assmus-Mattson theorem, for example, establishes a strong
connection between self-dual codes and designs. This connection was
used in \cite{kayayildiz} to find new 3-designs from Type II
extremal binary self-dual codes of length 80.

To understand some of the construction methods for self-dual codes,
we recall that a binary self-dual code of length $2n$ is generated
by (up to equivalence) a matrix of the form $[I_n|A]$, where $A$ is
an $n\times n$ block matrix. In the absence of any other algorithm,
the randomness of the matrix $A$ makes for an impractical search
field of $2^{n^2}$. Even when the orthogonality relations are
factored in, we still have a search field of size $2^{n(n+1)/2} $,
which is still considerably far from being practical and impractical
with the current technology. This is partly the reason why the
existence/non-existence of the Type II extremal binary self-dual
code of length 72 is still an open problem for coding theorists.

The difficulties in a general search for self-dual codes have led
researchers to utilize particular types of matrices in an effort to
reduce the search field. Most common techniques in the literature
use some variation of a construction that uses circulant matrices.
The double-circulant, bordered double circulant, four circulant
constructions are all special constructions that use circulant
matrices. In most instances
of these constructions, the search field is reduced from $2^{O(n^2)}$ to $%
2^{O(n)}$, which is a big improvement in the implementation of
search algorithms.

Another approach that has been used in the literature is combining
the constructions mentioned above over rings that are equipped with
orthogonality-preserving Gray maps. The algebraic structure of rings
and the nature of the Gray map lead to binary self-dual codes with a
particular automorphism group that may have been missed by the
previous constructions. This has been successfully applied in works
such as \cite{gildea}, \cite{kaya}, \cite{shortkharaghani},
\cite{kayayildiz}, \cite{pasa}. In\cite{fsd}, these ideas were
applied for constructing formally self-dual codes of high minimum
distances.

In this work, we describe a construction coming from a Baumert-Hall
array to find binary self-dual codes. Similar matrices coming from
Discrete Mathematics have been used to construct self-dual codes
before, e.g. \cite{shortkharaghani}, \cite{tonchev}.  We apply the
constructions over the
binary field as well as the rings $\mathbb{F}_2+u\mathbb{F}_2$ and $\mathbb{F%
}_4+u\mathbb{F}_4$, which are equipped with orthogonality-preserving
Gray maps. The constructions turn out to be efficient as we are able
to find many new extremal binary self-dual codes. In particular we
find 46 extremal binary self-dual codes of length 68 with new weight
enumerators, including the examples of codes with the rare
$\gamma=5$ parameter in $W_{68,2}$. The existence of codes with
$\gamma=8$ and $\gamma=9$ is still an open problem. We also find 26
new best known Type II codes of length 72 and 8 new extremal Type II
codes of length 80 that lead to new $3-(80,16,665)$ designs.

The rest of the work is organized as follows. In section 2, we give
the preliminaries on circulant matrices, self-dual codes, alphabets
we use and Baumert-Hall arrays. In section 3, we describe the
construction method for self-dual codes. In section 4, we give the
numerical results and tables corresponding to the codes constructed.
We finish the paper with concluding remarks and directions for
possible future research.

\section{Preliminaries}

\subsection{Matrices}

The circulant matrices are a special type of matrices that are used
heavily in many constructions for extremal self-dual codes. We
recall that a circulant matrix is a square matrix where each row is
a right-circular shift of the previous row. In other words, if
$\overline{r}$ is the first row, a typical circulant matrix is of
the form
\begin{equation}
\left[
\begin{array}{c}
\overline{r} \\ \hline \sigma(\overline{r}) \\ \hline
\sigma^2(\overline{r}) \\ \hline \vdots \\ \hline
\sigma^{n-1}(\overline{r})%
\end{array}
\right],
\end{equation}
where $\sigma$ denotes the right circular shift. It is clear that,
with $T$ denoting the permutation matrix corresponding to the
$n$-cycle $(123...n)$, a circulant matrix with first row $(a_1,a_2,
\dots, a_n)$ can be expressed as a polynomial in $T$ as:
\begin{equation*}
a_1I_n+a_2T+a_3T^2+ \cdots + a_nT^{n-1}.
\end{equation*}
Since $T$ satisfies $T^n=I_n$, this shows that circulant matrices
commute. This property of circulant matrices is essential in the
four-circulant constructions as well as the constructions that we
will be using in subsequent sections.

$lambda$-circulant matrices are similar to circulant matrices, where
instead of the right circular shift, the $\lambda$-circular shift is
used:
\begin{equation*}
\sigma_{\lambda}(a_1, a_2, \dots, a_n) = (\lambda a_n, a_1, \dots,
a_{n-1}).
\end{equation*}
Thus a $\lambda$-circulant matrix is a matrix of the form

\begin{equation}
\left[
\begin{array}{c}
\overline{r} \\ \hline \sigma_{\lambda}(\overline{r}) \\ \hline
\sigma_{\lambda}^2(\overline{r}) \\ \hline \vdots \\ \hline
\sigma_{\lambda}^{n-1}(\overline{r})%
\end{array}
\right],
\end{equation}
where $\overline{r}$ is the first row. $\lambda$-circulant matrices
share the commutativity property of circulant matrices in matrix
multiplication.

When $\lambda=1$ we get the circulant matrices and when $\lambda=-1$
we get the so-called negacirculant matrices. Negacirculant matrices
have recently been used for constructing self-dual codes in
\cite{Patrick}.
\subsection{Background on Codes}

Let $R$ be a finite ring. A linear \emph{code} $C$ of length $n$ \
over $R$
is an $R$-submodule of $R^{n}$. The elements of $C$ are called \emph{%
codewords}.

Let $\langle \textbf{u},\textbf{v}\rangle $ be inner product of two codewords $\textbf{u}$ and $\textbf{v}$ in $%
R^{n}$ which is defined as $\langle \textbf{u},\textbf{v}\rangle
=\sum_{i=1}^{n}u_{i}v_{i}$, where the operations are done in
$R^{n}$. The \emph{dual code} of a code $C$ is $C^{\perp
}=\{\textbf{v}\in R^{n}\mid \langle \textbf{u},\textbf{v}\rangle =0$
for all $\textbf{v}\in C\}$. If $C\subseteq C^{\perp }$, $C$ is
called \emph{self-orthogonal}$,$ and $C$ is\emph{\ self-dual} if
$C=C^{\perp }$.

The main case of interest for us is the case when $R=\mathbb{F}_2$,
in which case we obtain the usual binary self-dual codes. Binary
self-dual codes are called Type II if the weights of all codewords
are multiples of 4 and Type I otherwise. Rains finalized the upper
bound for the minimum distance $d$ of a binary self-dual code of
length $n$ in \cite{Rains} as $d\leq $ $4\lfloor \frac{n}{24}\rfloor
+6$ if $n\equiv 22\pmod{24}$ and $d\leq $ $4\lfloor
\frac{n}{24}\rfloor +4$, otherwise. A self-dual binary code is
called \textit{extremal} if it meets the bound. Extremal binary
self-dual codes of different lengths have particular weight
enumerators as has been described in \cite{conway},
\cite{dougherty1}. However, while for some lengths a complete
classification has been completed, for many lengths the existence of
codes with a particular weight enumerator is still an open problem.
With the constructions that we apply in this work, we have added to
the list of known codes.

We will be considering two special rings besides the binary field in
constructing our examples, i.e., the ring $\mathbb{F}_2+u\mathbb{F}_2$ and $%
\mathbb{F}_4+u\mathbb{F}_4$. Let
$\mathbb{F}_{4}=\mathbb{F}_{2}\left( \omega \right) $ be the
quadratic field extension of $\mathbb{F}_2$, where $\omega
^{2}+\omega +1=0$. The ring $\mathbb{F}_{4}+u\mathbb{F}_{4}$ defined via $%
u^{2}=0$ is a commutative binary ring of size $16$. We may easily
observe that it is isomorphic to $\mathbb{F}_{2}\left[ \omega
,u\right] /\left\langle u^{2},\omega ^{2}+\omega +1\right\rangle $.
The ring has a unique non-trivial ideal $\left\langle u\right\rangle
=\left\{ 0,u,u\omega ,u+u\omega \right\} $. Note that
$\mathbb{F}_4+u\mathbb{F}_4$ can be viewed as an extension of
$\mathbb{F}_2+u\mathbb{F}_2$ and so we can describe any element of
$\mathbb{F}_4+u\mathbb{F}_4$ in the form $\omega a+\bar{\omega}b$
uniquely, where $a,b \in \mathbb{F}_2+u\mathbb{F}_2$.

The maps $\phi_1: \mathbb{F}_2+u\mathbb{F}_2 \rightarrow
\mathbb{F}_2^2$, given by $\phi_1(a+ub) = (b, a+b)$ and
\begin{equation*}
\varphi _{\mathbb{F}_{4}+u\mathbb{F}_{4}}:\left( \mathbb{F}_{4}+u\mathbb{F}%
_{4}\right) ^{n}\rightarrow \left(
\mathbb{F}_{2}+u\mathbb{F}_{2}\right)
^{2n}, a\omega +b\overline{\omega }\mapsto \left( a,b\right) \text{, \ }%
a,b\in \left( \mathbb{F}_{2}+u\mathbb{F}_{2}\right) ^{n}
\end{equation*}
are orthogonality and distance preserving maps that were described
partially in \cite{ling} and were fully described and used in
\cite{kayayildiz}. They will be used here to construct binary
self-dual codes.

In order to fit the upcoming tables we use hexadecimal number sytem
to describe the elements of $\F_4+u\F_4$. The one-to-one
correspondence between hexadecimals and binary $4$ tuples is as
follows:
\begin{eqnarray*}
0 &\leftrightarrow &0000,\ 1\leftrightarrow 0001,\ 2\leftrightarrow
0010,\
3\leftrightarrow 0011, \\
4 &\leftrightarrow &0100,\ 5\leftrightarrow 0101,\ 6\leftrightarrow
0110,\
7\leftrightarrow 0111, \\
8 &\leftrightarrow &1000,\ 9\leftrightarrow 1001,\ A\leftrightarrow
1010,\
B\leftrightarrow 1011, \\
C &\leftrightarrow &1100,\ D\leftrightarrow 1101,\ E\leftrightarrow
1110,\ F\leftrightarrow 1111.
\end{eqnarray*}
To express elements of $\mathbb{F}_{4}+u\mathbb{F}_{4}$, we use the
ordered
basis $\left\{ u\omega ,\omega ,u,1\right\} $. For instance $1+u\omega $ in $%
\mathbb{F}_{4}+u\mathbb{F}_{4}$ is expressed as $1001$ which is $9$.

\subsection{Arrays for orthogonal designs}
We begin with the following general definition of an orthogonal
design:
\begin{definition}
An orthogonal design of order $n$ and type $\left(
u_{1},u_{2},\ldots ,u_{s}\right) $ on variables $x_{1},x_{2},\ldots
,x_{s}$ is an $n\times n$ matrix $A$ with entries $\left\{ 0,\pm
x_{1},\pm x_{2},\ldots ,\pm
x_{s}\right\} $ where $x_{i}$'s are commuting indeterminates and%
\begin{equation*}
AA^{T}=\sum\limits_{i=1}^{s}u_{i}x_{i}^{2}I_{n}.
\end{equation*}%
It is denoted by $OD\left( n;u_{1},u_{2},\ldots ,u_{s}\right) $.
\end{definition}

For instance
\begin{equation*}
\left(
\begin{array}{cccc}
a & b & c & d \\
-b & a & -d & c \\
-c & d & a & -b \\
-d & -c & b & a%
\end{array}%
\right)
\end{equation*}%
is an $OD\left( 4;1,1,1,1\right) $. When we replace $a,b,c$ and $d$
respectively with symmetric circulant matrices we get the Williamson array \cite%
{williamson}
\begin{equation*}
\left(
\begin{array}{cccc}
A & B & C & D \\
-B & A & -D & C \\
-C & D & A & -B \\
-D & -C & B & A%
\end{array}%
\right).
\end{equation*}%
Here, $A,B,C$ and $D$ are symmetric circulant matrices.

Baumert-Hall arrays (\cite{baumert}) are a generalization of
Williamson arrays. \

\begin{definition}
A $4t\times 4t$ array of $\pm A,\pm B,\pm C,\pm D$ is said to be a
Baumert-Hall array if each indeterminate occurs exactly $t$ times in
each row and column and the distinct rows are formally orthogonal.
\end{definition}

The Goethals-Seidel array is introduced in \cite{goethals} and is a
special Baumert-Hall array defined as:
\begin{equation*}
\left(
\begin{array}{cccc}
A & BR & CR & DR \\
-BR & A & D^{T}R & -C^{T}R \\
-CR & -D^{T}R & A & B^{T}R \\
-DR & C^{T}R & -B^{T}R & A%
\end{array}%
\right),
\end{equation*}%
where $A,B,C$ and $D$ are circulant square matrices and $R$ is the
back diagonal matrix. It is used to construct self-dual codes in
\cite{betsumiya}.

A short Kharaghani array is introduced in \cite{kharaghani};
\begin{equation*}
\left(
\begin{array}{cccc}
A & B & CR & DR \\
-B & A & DR & -CR \\
-CR & -DR & A & B \\
-DR & CR & -B & A%
\end{array}%
\right)
\end{equation*}%
where $A,B,C$ and $D$ are circulant square matrices and $R$ is the
back diagonal matrix. Recently, the array and a variation of it are
used to construct self-dual codes in \cite{shortkharaghani}.

\section{Self-dual codes via Baumert-Hall arrays\label{baumerthall}}

In this section, we give constructions for self-dual codes via a
Baumert-Hall array. The constructions are applicable over
commutative
Frobenius rings with various characteristics. Throughout the section let $%
\mathcal{R}$ denote a commutative Frobenius ring.

In \cite{tez}, Gholamiangonabadi and Kharaghani introduced the
following Baumert-Hall array;

\begin{equation}
\left(
\begin{array}{cccc}
A & B & C & D \\
-B & A & -D & C \\
-C^{T} & D^{T} & A^{T} & -B^{T} \\
-D^{T} & -C^{T} & B^{T} & A^{T}%
\end{array}%
\right)  \label{array}
\end{equation}%
where $A,B,C$ and $D$ are circulant matrices which are amicable with
the pairing $(A,B)$ and $(C,D)$ (i.e., $AB^T=BA^T$ and $CD^T=DC^T$).
The array \ref{array} can be used to construct self-dual codes as
follows:

\begin{theorem}
\label{baumert} Let $\lambda $ be an element of the ring $\mathcal{R}$ with $%
\lambda ^{2}=1$. Let $\mathcal{C}$ be the linear code over
$\mathcal{R}$ of length $8n$ generated by the matrix in the
following form;
\begin{equation*}
G:=\left( \enspace I_{4n}\enspace%
\begin{array}{|cccc}
A & B & C & D \\
-B & A & -D & C \\
-C^{T} & D^{T} & A^{T} & -B^{T} \\
-D^{T} & -C^{T} & B^{T} & A^{T}%
\end{array}%
\right)
\end{equation*}%
where $A,B,C$ and $D$ are $\lambda $-circulant matrices over the ring $%
\mathcal{R}$ satisfying the conditions%
\begin{eqnarray}
AA^{T}+BB^{T}+CC^{T}+DD^{T} &=&-I_{n}\text{ and}  \label{baumerth} \\
AB^{T}-BA^{T}+CD^{T}-DC^{T} &=&0.  \label{baumerth2}
\end{eqnarray}%
Then $\mathcal{C}$ is self-dual.
\end{theorem}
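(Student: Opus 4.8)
The plan is to establish the two ingredients that together force self-duality: first that $\mathcal{C}$ is self-orthogonal, i.e. $GG^{T}=0$, and second that $\mathcal{C}$ has the correct size, so that self-orthogonality upgrades to $\mathcal{C}=\mathcal{C}^{\perp}$. Writing $M$ for the $4\times 4$ block matrix appearing on the right of $G$, so that $G=(\,I_{4n}\mid M\,)$, we have $GG^{T}=I_{4n}+MM^{T}$; hence self-orthogonality is equivalent to the single matrix identity $MM^{T}=-I_{4n}$. For the size, the leading identity block makes the $4n$ rows of $G$ $\mathcal{R}$-linearly independent, so $\mathcal{C}$ is free of rank $4n$ and $|\mathcal{C}|=|\mathcal{R}|^{4n}$. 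Since $\mathcal{R}$ is a finite commutative Frobenius ring, the MacWilliams-type duality $|\mathcal{C}|\,|\mathcal{C}^{\perp}|=|\mathcal{R}|^{8n}$ holds, so once $\mathcal{C}\subseteq\mathcal{C}^{\perp}$ (equivalently $GG^{T}=0$) is shown we obtain $|\mathcal{C}^{\perp}|=|\mathcal{R}|^{4n}=|\mathcal{C}|$ and therefore $\mathcal{C}=\mathcal{C}^{\perp}$. Thus the whole theorem reduces to proving $MM^{T}=-I_{4n}$.

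The essential structural input, and the only place where the hypothesis $\lambda^{2}=1$ is used, is a commutativity statement which I would isolate as a short lemma. Let $T_{\lambda}$ denote the matrix of the shift $\sigma_{\lambda}$, so that every $\lambda$-circulant is a polynomial in $T_{\lambda}$ and $T_{\lambda}^{n}=\lambda I_{n}$. A direct inspection shows that when $\lambda^{2}=1$ one has $T_{\lambda}^{T}=T_{\lambda}^{-1}$ (indeed $T_{\lambda}^{-1}=\lambda T_{\lambda}^{\,n-1}$), i.e. $T_{\lambda}$ is orthogonal. Consequently the transpose of a $\lambda$-circulant $X=\sum_{i}a_{i}T_{\lambda}^{i}$ is $X^{T}=\sum_{i}a_{i}T_{\lambda}^{-i}$, again a polynomial in $T_{\lambda}$ and hence itself a $\lambda$-circulant. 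Therefore the eight matrices $A,B,C,D,A^{T},B^{T},C^{T},D^{T}$ are all polynomials in the single matrix $T_{\lambda}$, and so they pairwise commute.

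With commutativity in hand, the rest is a block computation. First I would record
\begin{equation*}
M^{T}=\left(
\begin{array}{cccc}
A^{T} & -B^{T} & -C & -D \\
B^{T} & A^{T} & D & -C \\
C^{T} & -D^{T} & A & B \\
D^{T} & C^{T} & -B & A
\end{array}
\right),
\end{equation*}
and then multiply $MM^{T}$ one block at a time. Each of the four diagonal blocks equals, after using $X^{T}X=XX^{T}$ where needed, the sum $AA^{T}+BB^{T}+CC^{T}+DD^{T}$, which is $-I_{n}$ by \eqref{baumerth}. Of the twelve off-diagonal blocks, eight reduce to commutators of the shape $(CA-AC)+(BD-DB)$ and vanish by commutativity alone; the remaining four — the $(1,2)$, $(2,1)$, $(3,4)$ and $(4,3)$ blocks — reduce, again using commutativity to align the factors, to $\pm(AB^{T}-BA^{T}+CD^{T}-DC^{T})$, which is $0$ by \eqref{baumerth2}. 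Hence $MM^{T}=-I_{4n}$, which is exactly what was needed.

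The only non-routine point is the commutativity lemma of the second paragraph, and I expect that to be the main obstacle: without $\lambda^{2}=1$ the transpose of a $\lambda$-circulant need not be a $\lambda$-circulant, and then both the diagonal blocks and the four "$\lambda$-sensitive" off-diagonal blocks would fail to collapse. Once that lemma is granted, the block arithmetic is entirely mechanical and every term is accounted for by one of \eqref{baumerth}, \eqref{baumerth2}, or a bare commutator, after which the Frobenius count finishes the proof.
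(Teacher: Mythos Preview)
Your proposal is correct and follows essentially the same route as the paper: reduce self-duality to the identity $MM^{T}=-I_{4n}$ and verify it block by block, with the diagonal blocks handled by \eqref{baumerth}, four off-diagonal blocks by \eqref{baumerth2}, and the rest by commutativity. You supply two points the paper leaves implicit---the explicit reason $\lambda^{2}=1$ forces transposes of $\lambda$-circulants to be $\lambda$-circulants (hence all eight matrices commute), and the Frobenius cardinality argument upgrading self-orthogonality to self-duality---but the architecture is the same.
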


\begin{proof}
Let $A,B,C$ and $D$ be $\lambda$-circulant matrices and
\begin{equation*}
N:=\left(
\begin{array}{cccc}
A & B & C & D \\
-B & A & -D & C \\
-C^{T} & D^{T} & A^{T} & -B^{T} \\
-D^{T} & -C^{T} & B^{T} & A^{T}%
\end{array}%
\right)
\end{equation*}
then it is enough to show that $NN^{T}=-I_{4n}$. We observe that
\begin{equation*}
NN^{T}=\left(
\begin{array}{cccc}
X & Y & Z & T \\
-Y & X & -T & -Z \\
Z^{T} & -T^{T} & X^{T} & U \\
T^{T} & Z^{T} & -U & X^{T}%
\end{array}%
\right)
\end{equation*}
where%
\begin{eqnarray*}
X &=&AA^{T}+BB^{T}+CC^{T}+DD^{T}, \\
Y &=&-AB^{T}+BA^{T}-CD^{T}+DC^{T}, \\
Z &=&-AC+BD+CA-BD, \\
T &=&-AD-BC+CB+DA, \\
U &=&C^{T}D-D^{T}C+A^{T}B-B^{T}A.
\end{eqnarray*}
Circulant matrices commute therefore $Z=0$ and $T=0$. By (\ref{baumerth2}) $%
Y=0$ and $U=0$. By (\ref{baumerth}) $X=-I_n$. Result follows.
\end{proof}

A particular case of Theorem \ref{baumert} is given where the condition (\ref%
{baumerth}) is splitted into two conditions, which allows us to
search for pairs of matrices $(A,B)$ and $(C,D)$ stepwise.

\begin{corollary}
\label{amicable} Let $\lambda $ be an element of the ring
$\mathcal{R} $ with $\lambda ^{2}=1$. Let $\mathcal{C}$ be the
linear code over $\mathcal{R} $ of length $8n$ generated by the
matrix in the following form;
\begin{equation*}
G:=\left( \enspace I_{4n}\enspace%
\begin{array}{|cccc}
A & B & C & D \\
-B & A & -D & C \\
-C^{T} & D^{T} & A^{T} & -B^{T} \\
-D^{T} & -C^{T} & B^{T} & A^{T}%
\end{array}%
\right)
\end{equation*}%
where $A,B,C$ and $D$ are $\lambda $-circulant matrices over the ring $%
\mathcal{R}$ satisfying the conditions%
\begin{eqnarray}
AA^{T}+BB^{T}+CC^{T}+DD^{T} &=&-I_{n},  \label{amicable1} \\
AB^{T}-BA^{T} &=&0\text{ and}  \label{amicable2} \\
CD^{T}-DC^{T} &=&0.  \label{amicable3}
\end{eqnarray}%
Then $\mathcal{C}$ is self-dual.
\end{corollary}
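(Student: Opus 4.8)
The plan is to obtain the Corollary as an immediate consequence of Theorem~\ref{baumert}. Since both statements assert the self-duality of the same code $\mathcal{C}$ generated by the same matrix $G$, built from $\lambda$-circulant matrices $A,B,C,D$, the only task is to verify that the hypotheses of the Corollary imply the hypotheses of the Theorem. Condition (\ref{amicable1}) is literally identical to condition (\ref{baumerth}), so nothing needs to be checked there.

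The main (and only substantive) point is to deduce condition (\ref{baumerth2}), namely $AB^T - BA^T + CD^T - DC^T = 0$, from the two separate conditions (\ref{amicable2}) and (\ref{amicable3}). This is trivial: adding $AB^T - BA^T = 0$ and $CD^T - DC^T = 0$ term by term gives exactly $(AB^T - BA^T) + (CD^T - DC^T) = 0$, which is (\ref{baumerth2}). Thus the hypotheses (\ref{amicable1})--(\ref{amicable3}) are strictly stronger than (\ref{baumerth})--(\ref{baumerth2}), and the conclusion follows directly by invoking Theorem~\ref{baumert}.

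There is really no obstacle here, since the Corollary merely records a convenient special case in which the single amicability-type condition (\ref{baumerth2}) is split into two independent conditions on the pairs $(A,B)$ and $(C,D)$ separately. I would present the proof in one or two sentences, observing that (\ref{amicable2}) and (\ref{amicable3}) together imply (\ref{baumerth2}), so that all the conditions of Theorem~\ref{baumert} are satisfied and $\mathcal{C}$ is self-dual. The pedagogical value of the Corollary, as the surrounding text indicates, is algorithmic rather than mathematical: it allows one to search for the pairs $(A,B)$ and $(C,D)$ in two independent stages.
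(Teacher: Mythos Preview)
Your proposal is correct and matches the paper's own treatment: the paper does not give a separate proof of this corollary but simply introduces it as ``a particular case of Theorem~\ref{baumert}\ldots where the condition\ldots is split into two conditions,'' which is exactly what you argue. Your observation that adding (\ref{amicable2}) and (\ref{amicable3}) yields (\ref{baumerth2}) is the entire content, and your one- or two-sentence write-up is appropriate.
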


It is easily observed that symmetric circulant matrices are
amicable. Therefore we have the following result;

\begin{corollary}
\label{symmetric} Let $\lambda $ be an element of the ring
$\mathcal{R}$ with $\lambda ^{2}=1$. Let $\mathcal{C}$ be the linear
code over $\mathcal{R} $ of length $8n$ generated by the matrix in
the following form;
\begin{equation*}
G:=\left( \enspace I_{4n}\enspace%
\begin{array}{|cccc}
A & B & C & D \\
-B & A & -D & C \\
-C^{T} & D^{T} & A^{T} & -B^{T} \\
-D^{T} & -C^{T} & B^{T} & A^{T}%
\end{array}%
\right)
\end{equation*}%
where $A,B$ are symmetric circulant matrices and $C$ and $D$ are $\lambda $%
-circulant matrices over the ring $\mathcal{R}$ satisfying the conditions%
\begin{eqnarray}
AA^{T}+BB^{T}+CC^{T}+DD^{T} &=&I_{n}\text{ and}  \label{symmetric1} \\
CD^{T}-DC^{T} &=&0.  \label{symmetric2}
\end{eqnarray}%
Then $\mathcal{C}$ is self-dual.
\end{corollary}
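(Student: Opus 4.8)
The plan is to deduce the statement directly from Corollary~\ref{amicable} by verifying that symmetric circulant matrices automatically satisfy the amicability condition (\ref{amicable2}). First I would record that if $A$ is a symmetric circulant matrix then $A^{T}=A$, and likewise $B^{T}=B$. As noted in the preliminaries, every circulant matrix is a polynomial in the shift matrix $T$, so any two circulant matrices commute; in particular $AB=BA$. Combining these two facts yields
\begin{equation*}
AB^{T}-BA^{T}=AB-BA=0,
\end{equation*}
so that $(A,B)$ is an amicable pair and hypothesis (\ref{amicable2}) of Corollary~\ref{amicable} holds with no extra assumptions.

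With (\ref{amicable2}) secured, the remaining hypotheses of Corollary~\ref{amicable} are (\ref{amicable1}) and (\ref{amicable3}). Condition (\ref{amicable3}) is verbatim the hypothesis (\ref{symmetric2}), while (\ref{amicable1}) is to be matched against (\ref{symmetric1}). The single point deserving care is the sign of the identity matrix: Corollary~\ref{amicable} requires $AA^{T}+BB^{T}+CC^{T}+DD^{T}=-I_{n}$, whereas (\ref{symmetric1}) is written with $+I_{n}$. Since the rings $\mathcal{R}$ appearing in our constructions all have characteristic $2$, we have $-I_{n}=I_{n}$ and the two conditions coincide, so (\ref{symmetric1}) is exactly (\ref{amicable1}) in the setting of interest.

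Having verified all three hypotheses of Corollary~\ref{amicable}, I would simply invoke that corollary to conclude that $\mathcal{C}$ is self-dual. I do not anticipate a genuine obstacle: the whole argument reduces to the observation that symmetry together with the commutativity of circulants forces amicability, which is the one-line computation above. The only thing to monitor is the sign bookkeeping just discussed, and this is harmless in characteristic $2$.
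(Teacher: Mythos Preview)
Your proposal is correct and matches the paper's own argument: the paper simply remarks that ``symmetric circulant matrices are amicable'' and then states Corollary~\ref{symmetric} as an immediate consequence of Corollary~\ref{amicable}, which is exactly the reduction you carry out. Your explicit handling of the $I_n$ versus $-I_n$ discrepancy in characteristic~$2$ is a careful touch that the paper leaves implicit.
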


\begin{remark}
Note that if we assume $A,B,C$ and $D$ are all symmetric circulant
matrices, then we obtain a special case of \ Corollary
\ref{symmetric}, which corresponds to Williamson array. Through
computational results we observed that this case is not promising in
constructing self-dual codes.
\end{remark}

\section{Computational Results}

\label{computational} The constructions introduced in Section \ref%
{baumerthall} are applied over rings of characteristic $2$ such as $\mathbb{F%
}_{2}$, $\mathbb{F}_{2}+u\mathbb{F}_{2}$ and $\mathbb{F}_{4}+u\mathbb{F}_{4}$%
. By using the corresponding Gray maps binary self-dual codes of
lengths $64$ and $80$ have been constructed.

The possible weight enumerators of extremal Type I self-dual codes
(of parameters $\left[ 64,32,12\right]$) were determined in
\cite{conway} as:
\begin{eqnarray*}
W_{64,1} &=&1+\left( 1312+16\beta \right) y^{12}+\left(
22016-64\beta
\right) y^{14}+\cdots ,:14\leq \beta \leq 284, \\
W_{64,2} &=&1+\left( 1312+16\beta \right) y^{12}+\left(
23040-64\beta \right) y^{14}+\cdots ,:0\leq \beta \leq 277.
\end{eqnarray*}%
The existence of the codes is unknown for most of the $\beta $
values. Most recently codes with new parameters were constructed in
\cite{kaya} (by a bordered four circulant construction) and
\cite{anev}. Together with these, codes exist with weight
enumerators for $\beta =$14, 16, 18, 20,
22, 24, 25, 26, 28, 29, 30, 32, 36, 39, 44, 46, 53, 59, 60, 64 and 74 in $%
W_{64,1}$ and for $\beta =$0, 1, 2, 3, 4, 5,\ 6, 7, 8, 9, 10, 11,
12, 13,\ 14, 15, 16,\ 17, 18, 20, 21,\ 22, 23, 24,\ 25,\ 26, 27,
28,\ 29,\ 30, 32,\ 33,\ 35, 36, 37, 38, 40,\ 41,\ 44, 48, 51,\ 52,\
56, 58, 64, 72, 80,\ 88,\ 96, 104, 108,\ 112,\ 114,\ 118,\ 120 and
184 in $W_{64,2}$.

In the following tables, we give extremal binary self-dual codes of
length $64$ obtained from applying constructions described in
Theorem 3.1 and Corollary 3.2 over the ring $\F_4+u\F_4$. The
orthogonality-preserving Gray map
$\phi_1\circ\varphi_{\F_4+u\F_4}:(\F_4+u\F_4)^n \rightarrow
\F_2^{4n}$ is used in finding the binary codes. Thus, we need
self-dual codes over $\F_4+u\F_4$ of length 16 for such codes. $r_A,
r_B, r_C, r_D$ denote the first rows of the matrices $A, B, C, D$
that appear in the constructions. \clearpage

\begin{table}[H]
\caption{Extremal binary self-dual codes of length 64 from self-dual
codes of length $16$ over $\mathbb{F}_{4}+u\mathbb{F}_{4}$ by
Theorem \protect\ref{baumert}} \label{tab:BH}
\begin{center}
\begin{tabular}{||c|c|c|c|c|c|c||c||}
\hline
$\mathcal{D}_{i}$ & $\lambda $ & $r_{A}$ & $r_{B}$ & $r_{C}$ & $r_{D}$ & $%
\left\vert Aut(\mathcal{D}_{i})\right\vert $ & $\beta $ in $W_{64,2}$ \\
\hline
$\mathcal{D}_{1}$ & $3$ & $\left( 1,B\right) $ & $\left( 7,C\right) $ & $%
\left( 6,D\right) $ & $\left( 4,5\right) $ & $2^{4}$ & $4$ \\ \hline
$\mathcal{D}_{2}$ & $3$ & $\left( C,6\right) $ & $\left( D,9\right) $ & $%
\left( 2,3\right) $ & $\left( C,0\right) $ & $2^{4}$ & $12$ \\
\hline
$\mathcal{D}_{3}$ & $3$ & $\left( 6,E\right) $ & $\left( A,9\right) $ & $%
\left( 0,E\right) $ & $\left( F,B\right) $ & $2^{5}$ & $24$ \\
\hline
$\mathcal{D}_{4}$ & $3$ & $\left( F,8\right) $ & $\left( E,9\right) $ & $%
\left( C,1\right) $ & $\left( A,4\right) $ & $2^{4}$ & $28$ \\
\hline
$\mathcal{D}_{5}$ & $3$ & $\left( 6,C\right) $ & $\left( 1,D\right) $ & $%
\left( 1,4\right) $ & $\left( F,D\right) $ & $2^{5}$ & $32$ \\
\hline
$\mathcal{D}_{6}$ & $3$ & $\left( C,4\right) $ & $\left( 9,5\right) $ & $%
\left( F,5\right) $ & $\left( E,9\right) $ & $2^{5}$ & $36$ \\
\hline
$\mathcal{D}_{7}$ & $3$ & $\left( 5,5\right) $ & $\left( 9,3\right) $ & $%
\left( 2,5\right) $ & $\left( 7,1\right) $ & $2^{5}$ & $40$ \\
\hline
$\mathcal{D}_{8}$ & $3$ & $\left( D,8\right) $ & $\left( 0,3\right) $ & $%
\left( 4,1\right) $ & $\left( F,5\right) $ & $2^{4}\times 3$ & $44$
\\ \hline
$\mathcal{D}_{9}$ & $3$ & $\left( E,E\right) $ & $\left( 1,5\right) $ & $%
\left( 5,5\right) $ & $\left( C,9\right) $ & $2^{5}$ & $48$ \\
\hline
$\mathcal{D}_{10}$ & $3$ & $\left( E,E\right) $ & $\left( 1,7\right) $ & $%
\left( 5,5\right) $ & $\left( 4,9\right) $ & $2^{5}$ & $52$ \\
\hline\hline
$\mathcal{D}_{11}$ & $9$ & $\left( 1,F\right) $ & $\left( 6,C\right) $ & $%
\left( 1,A\right) $ & $\left( C,2\right) $ & $2^{3}$ & $0$ \\ \hline
$\mathcal{D}_{12}$ & $9$ & $\left( D,2\right) $ & $\left( 1,4\right) $ & $%
\left( 7,F\right) $ & $\left( B,8\right) $ & $2^{3}$ & $4$ \\ \hline
$\mathcal{D}_{13}$ & $9$ & $\left( 9,C\right) $ & $\left( 3,3\right) $ & $%
\left( 5,B\right) $ & $\left( 2,2\right) $ & $2^{3}$ & $5$ \\ \hline
$\mathcal{D}_{14}$ & $9$ & $\left( B,3\right) $ & $\left( 6,E\right) $ & $%
\left( 4,9\right) $ & $\left( E,2\right) $ & $2^{4}$ & $8$ \\ \hline
$\mathcal{D}_{15}$ & $9$ & $\left( 1,7\right) $ & $\left( 6,9\right) $ & $%
\left( 5,D\right) $ & $\left( 6,4\right) $ & $2^{3}$ & $12$ \\
\hline
$\mathcal{D}_{16}$ & $9$ & $\left( B,A\right) $ & $\left( 7,B\right) $ & $%
\left( 6,4\right) $ & $\left( C,0\right) $ & $2^{3}$ & $13$ \\
\hline
$\mathcal{D}_{17}$ & $9$ & $\left( B,4\right) $ & $\left( D,D\right) $ & $%
\left( 4,C\right) $ & $\left( B,D\right) $ & $2^{3}$ & $16$ \\
\hline
$\mathcal{D}_{18}$ & $9$ & $\left( 0,2\right) $ & $\left( C,9\right) $ & $%
\left( 9,3\right) $ & $\left( B,7\right) $ & $2^{3}$ & $17$ \\
\hline
$\mathcal{D}_{19}$ & $9$ & $\left( 3,D\right) $ & $\left( A,F\right) $ & $%
\left( 8,6\right) $ & $\left( D,1\right) $ & $2^{3}$ & $21$ \\
\hline
$\mathcal{D}_{20}$ & $9$ & $\left( 7,3\right) $ & $\left( D,4\right) $ & $%
\left( 2,9\right) $ & $\left( 1,6\right) $ & $2^{4}$ & $32$ \\
\hline\hline
$\mathcal{D}_{21}$ & $B$ & $\left( C,3\right) $ & $\left( 2,2\right) $ & $%
\left( B,D\right) $ & $\left( 3,1\right) $ & $2^{3}$ & $5$ \\ \hline
$\mathcal{D}_{22}$ & $B$ & $\left( B,4\right) $ & $\left( 9,F\right) $ & $%
\left( E,C\right) $ & $\left( D,D\right) $ & $2^{4}$ & $8$ \\ \hline
$\mathcal{D}_{23}$ & $B$ & $\left( E,9\right) $ & $\left( 8,2\right) $ & $%
\left( B,F\right) $ & $\left( 1,1\right) $ & $2^{3}$ & $9$ \\ \hline
$\mathcal{D}_{24}$ & $B$ & $\left( 5,B\right) $ & $\left( 5,2\right) $ & $%
\left( 6,8\right) $ & $\left( 3,5\right) $ & $2^{3}$ & $13$ \\
\hline
$\mathcal{D}_{25}$ & $B$ & $\left( 3,6\right) $ & $\left( 8,6\right) $ & $%
\left( A,F\right) $ & $\left( C,9\right) $ & $2^{3}$ & $17$ \\
\hline
$\mathcal{D}_{26}$ & $B$ & $\left( C,4\right) $ & $\left( F,D\right) $ & $%
\left( 9,C\right) $ & $\left( F,B\right) $ & $2^{3}$ & $20$ \\
\hline
$\mathcal{D}_{27}$ & $B$ & $\left( 6,9\right) $ & $\left( A,D\right) $ & $%
\left( 8,6\right) $ & $\left( 1,6\right) $ & $2^{3}$ & $21$ \\
\hline
$\mathcal{D}_{28}$ & $B$ & $\left( 7,1\right) $ & $\left( D,8\right) $ & $%
\left( C,8\right) $ & $\left( 9,7\right) $ & $2^{3}$ & $25$ \\
\hline
$\mathcal{D}_{29}$ & $B$ & $\left( B,5\right) $ & $\left( 5,D\right) $ & $%
\left( B,3\right) $ & $\left( 0,7\right) $ & $2^{3}$ & $28$ \\
\hline
$\mathcal{D}_{30}$ & $B$ & $\left( 3,3\right) $ & $\left( 6,6\right) $ & $%
\left( 3,E\right) $ & $\left( A,4\right) $ & $2^{4}$ & $40$ \\
\hline
\end{tabular}%
\end{center}
\end{table}

\begin{table}[H]
\caption{Extremal binary self-dual codes of length 64 from self-dual
codes of length $16$ over $\mathbb{F}_{4}+u\mathbb{F}_{4}$ by
Corollary \protect\ref{amicable}} \label{tab:amicable}
\begin{center}
\begin{tabular}{||c|c|c|c|c|c|c||c||}
\hline
$\mathcal{E}_{i}$ & $\lambda $ & $r_{A}$ & $r_{B}$ & $r_{C}$ & $r_{D}$ & $%
\left\vert Aut(\mathcal{E}_{i})\right\vert $ & $\beta $ in $W_{64,2}$ \\
\hline\hline
$\mathcal{E}_{1}$ & $3$ & $\left( F,B\right) $ & $\left( 1,E\right) $ & $%
\left( 7,1\right) $ & $\left( 0,A\right) $ & $2^{4}$ & $0$ \\ \hline
$\mathcal{E}_{2}$ & $3$ & \texttt{"} & \texttt{"} & $\left( 9,9\right) $ & $%
\left( A,A\right) $ & $2^{5}$ & $4$ \\ \hline
$\mathcal{E}_{3}$ & $3$ & \texttt{"} & \texttt{"} & $\left( 7,D\right) $ & $%
\left( E,4\right) $ & $2^{4}$ & $24$ \\ \hline
$\mathcal{E}_{4}$ & $3$ & \texttt{"} & \texttt{"} & $\left( 1,1\right) $ & $%
\left( 8,8\right) $ & $2^{5}$ & $28$ \\ \hline
$\mathcal{E}_{5}$ & $3$ & \texttt{"} & \texttt{"} & $\left( 4,4\right) $ & $%
\left( 7,7\right) $ & $2^{5}$ & $52$ \\ \hline
$\mathcal{E}_{6}$ & $3$ & $\left( 9,6\right) $ & $\left( F,9\right) $ & $%
\left( 5,F\right) $ & $\left( 4,C\right) $ & $2^{5}$ & $12$ \\
\hline
$\mathcal{E}_{7}$ & $3$ & \texttt{"} & \texttt{"} & $\left( F,7\right) $ & $%
\left( C,C\right) $ & $2^{4}$ & $16$ \\ \hline
$\mathcal{E}_{8}$ & $3$ & \texttt{"} & \texttt{"} & $\left( E,E\right) $ & $%
\left( 7,7\right) $ & $2^{5}$ & $20$ \\ \hline
$\mathcal{E}_{9}$ & $3$ & \texttt{"} & \texttt{"} & $\left( F,F\right) $ & $%
\left( 4,4\right) $ & $2^{5}$ & $36$ \\ \hline\hline
$\mathcal{E}_{10}$ & $9$ & $\left( F,B\right) $ & $\left( B,C\right) $ & $%
\left( 2,2\right) $ & $\left( 3,9\right) $ & $2^{3}$ & $5$ \\ \hline
$\mathcal{E}_{11}$ & $9$ & $\left( D,3\right) $ & $\left( 3,4\right) $ & $%
\left( A,2\right) $ & $\left( 3,B\right) $ & $2^{3}$ & $13$ \\
\hline
$\mathcal{E}_{12}$ & $9$ & $\left( 0,2\right) $ & $\left( B,3\right) $ & $%
\left( B,D\right) $ & $\left( CB\right) $ & $2^{3}$ & $17$ \\
\hline\hline
$\mathcal{E}_{13}$ & $B$ & $\left( 3,1\right) $ & $\left( 2,2\right) $ & $%
\left( B,5\right) $ & $\left( C,9\right) $ & $2^{3}$ & $5$ \\ \hline
$\mathcal{E}_{14}$ & $B$ & $\left( 9,6\right) $ & $\left( 5,9\right) $ & $%
\left( 0,2\right) $ & $\left( B,9\right) $ & $2^{3}$ & $17$ \\
\hline
\end{tabular}%
\end{center}
\end{table}

\subsection{New Type II self-dual $\left[ 72,36,12\right] _{2}$-codes}
The existence of Type II extremal binary code of length $72$ is
still an open problem. The best known Type II self-dual codes of
length $72$ have parameters $[72,36,12]$. The possible weight
enumerators for these codes are given in \cite{dougherty1} as
\begin{equation*}
\begin{tabular}{l}
$W_{72}=1+(4398+\alpha )y^{12}+(197073-12\alpha
)y^{16}+$\textperiodcentered \textperiodcentered \textperiodcentered
\end{tabular}%
\end{equation*}%
Note that a code with weight enumerator $\alpha =-4398$ would
correspond to the above-mentioned extremal binary self-dual code (of
parameters $\left[ 72,36,16\right]$). For a list of known $\alpha $
values we refer to \cite{tufekci}. We construct codes with new $\alpha$ values by Corollary \ref%
{symmetric} over the binary field $\F_2$. Since $A$ and $B$ are symmetric circulant matrices only first $%
5 $ entries of the first rows are given in Table \ref{tab:72sym}.

\begin{table}[H]
\caption{Type II self-dual codes of length 72 by Corollary \protect\ref%
{symmetric}} \label{tab:72sym}
\begin{center}
\begin{tabular}{||c|c|c|c|c||c||}
\hline
$\mathcal{C}_{72,i}$ & $r_{A}$ & $r_{B}$ & $r_{C}$ & $r_{D}$ & $\alpha $ \\
\hline\hline
$\mathcal{C}_{72,1}$ & $\left( 11011\right) $ & $\left( 01010\right) $ & $%
\left( 101100101\right) $ & $\left( 110001000\right) $ & $-2736$ \\
\hline
$\mathcal{C}_{72,2}$ & $\left( 10110\right) $ & $\left( 00000\right) $ & $%
\left( 001110110\right) $ & $\left( 010011110\right) $ & $-2748$ \\
\hline
$\mathcal{C}_{72,3}$ & $\left( 11001\right) $ & $\left( 10101\right) $ & $%
\left( 011010010\right) $ & $\left( 001001111\right) $ & $-2844$ \\
\hline
$\mathcal{C}_{72,4}$ & $\left( 00111\right) $ & $\left( 10110\right) $ & $%
\left( 000100011\right) $ & $\left( 110010110\right) $ & $-2964$ \\
\hline
$\mathcal{C}_{72,5}$ & $\left( 11001\right) $ & $\left( 10101\right) $ & $%
\left( 001100010\right) $ & $\left( 011011011\right) $ & $-3060$ \\
\hline
$\mathcal{C}_{72,6}$ & $\left( 00000\right) $ & $\left( 10100\right) $ & $%
\left( 010111010\right) $ & $\left( 000100101\right) $ & $-3396$ \\
\hline
\end{tabular}%
\end{center}
\end{table}

\begin{example}
Let $A=circ\left( 001101000\right) ,$ $B=circ\left( 100100100\right) ,$ $%
C=circ\left( 000101001\right) $ and $B=circ\left( 111111000\right) $
which are amicable with the pairing $\left( A,B\right) $ and $\left(
C,D\right) $. In other words, the conditions \ref{amicable2} and
\ref{amicable3} are
satisfied. Moreover, they satisfy the equation \ref{amicable1}. Let $%
\mathcal{C}_{72,7}$ be the code obtained via Corollary
\ref{amicable}. Then it is a Type II self-dual $\left[
72,36,12\right]$ code with weight enumerator $\alpha =-3618$ and
automorphism group of order $36$.
\end{example}

By using Theorem \ref{baumert} we obtain more codes, which are
listed in Table \ref{tab:72bh}.

\begin{table}[H]
\caption{Type II self-dual codes of length 72 by Theorem \protect\ref%
{baumert}} \label{tab:72bh}
\begin{center}
\begin{tabular}{||c|c|c|c|c||c||}
\hline
$\mathcal{C}_{72,i}$ & $r_{A}$ & $r_{B}$ & $r_{C}$ & $r_{D}$ & $\alpha $ \\
\hline\hline $\mathcal{C}_{72,8}$ & $\left( 011111011\right) $ &
$\left( 001100111\right)
$ & $\left( 110110011\right) $ & $\left( 010011101\right) $ & $-2682$ \\
\hline $\mathcal{C}_{72,9}$ & $\left( 000011111\right) $ & $\left(
100001110\right)
$ & $\left( 111011011\right) $ & $\left( 010000110\right) $ & $-2700$ \\
\hline $\mathcal{C}_{72,10}$ & $\left( 011011111\right) $ & $\left(
011100100\right) $ & $\left( 000010110\right) $ & $\left(
101010011\right) $ & $-2754$ \\ \hline $\mathcal{C}_{72,11}$ &
$\left( 010011101\right) $ & $\left( 111000011\right) $ & $\left(
010000101\right) $ & $\left( 111100101\right) $ & $-2790$ \\ \hline
$\mathcal{C}_{72,12}$ & $\left( 110111001\right) $ & $\left(
110100101\right) $ & $\left( 100101011\right) $ & $\left(
000011100\right) $ & $-2802$ \\ \hline $\mathcal{C}_{72,13}$ &
$\left( 011000100\right) $ & $\left( 101110111\right) $ & $\left(
101010100\right) $ & $\left( 110110100\right) $ & $-2862$ \\ \hline
$\mathcal{C}_{72,14}$ & $\left( 100111011\right) $ & $\left(
010011110\right) $ & $\left( 001110000\right) $ & $\left(
000110111\right) $ & $-2982$ \\ \hline $\mathcal{C}_{72,15}$ &
$\left( 100101000\right) $ & $\left( 111111001\right) $ & $\left(
000011110\right) $ & $\left( 011110010\right) $ & $-2988$ \\ \hline
$\mathcal{C}_{72,16}$ & $\left( 110001000\right) $ & $\left(
011011001\right) $ & $\left( 011111010\right) $ & $\left(
010110101\right) $ & $-3132$ \\ \hline $\mathcal{C}_{72,17}$ &
$\left( 001000011\right) $ & $\left( 000111000\right) $ & $\left(
001101110\right) $ & $\left( 001110010\right) $ & $-3150$ \\ \hline
$\mathcal{C}_{72,18}$ & $\left( 000010110\right) $ & $\left(
001101111\right) $ & $\left( 010101110\right) $ & $\left(
111111111\right) $ & $-3654$ \\ \hline $\mathcal{C}_{72,19}$ &
$\left( 101111011\right) $ & $\left( 110101101\right) $ & $\left(
000011111\right) $ & $\left( 001101011\right) $ & $-3690$ \\ \hline
$\mathcal{C}_{72,20}$ & $\left( 000001111\right) $ & $\left(
101101111\right) $ & $\left( 000111000\right) $ & $\left(
101100110\right) $ & $-3774$ \\ \hline $\mathcal{C}_{72,21}$ &
$\left( 000100000\right) $ & $\left( 010010111\right) $ & $\left(
001111100\right) $ & $\left( 000111010\right) $ & $-3780$ \\ \hline
$\mathcal{C}_{72,22}$ & $\left( 100101101\right) $ & $\left(
000110110\right) $ & $\left( 010000101\right) $ & $\left(
111000000\right) $ & $-3792$ \\ \hline $\mathcal{C}_{72,23}$ &
$\left( 000010101\right) $ & $\left( 011110001\right) $ & $\left(
011010111\right) $ & $\left( 000111110\right) $ & $-3906$ \\ \hline
$\mathcal{C}_{72,24}$ & $\left( 010000010\right) $ & $\left(
000001110\right) $ & $\left( 110100101\right) $ & $\left(
011010011\right) $ & $-3918$ \\ \hline $\mathcal{C}_{72,25}$ &
$\left( 101101111\right) $ & $\left( 101110110\right) $ & $\left(
001011000\right) $ & $\left( 000100110\right) $ & $-4068$ \\ \hline
$\mathcal{C}_{72,26}$ & $\left( 100010111\right) $ & $\left(
001100010\right) $ & $\left( 100110001\right) $ & $\left(
000101001\right) $ & $-4086$ \\ \hline
\end{tabular}%
\end{center}
\end{table}

\subsection{New extremal Type II binary self-dual codes of length $80$}

The weight enumerator of an extremal Type II binary self-dual code
of length $80$ ( of parameters $\left[ 80,40,16\right]$) is
uniquely determined as $1+97565y^{16}+12882688y^{20}+\cdots $ \cite%
{dougherty1}. Recently, new such codes were constructed via the four circulant construction over $\mathbb{F}_{2}+u\mathbb{F}%
_{2}$ in \cite{kayayildiz}. Here, we construct $8$ new codes by
using Theorem \ref{baumert} over the binary alphabet.
The nonequivalence of the codes is checked by the invariants. Let $%
\mathbf{c}_{1},\mathbf{c}_{2},\ldots, \mathbf{c}_{97565}$ be the
codewords of weight $16$ in an
extremal Type II self-dual code of length 80 and let $%
I_{j}=\left\vert \left\{ \left( \mathbf{c}_{k},\mathbf{c}_{l}\right)
|\ d\left( \mathbf{c}_{k}, \mathbf{c}_{l}\right) =j,\ k<l\right\}
\right\vert $ where $d$ is the Hamming distance. Two codes are
inequivalent if their $I_{16}$-values are different since $I_{16}$
is invariant under a permutation of the coordinates.

\begin{table}[H]
\caption{Type II self-dual codes of length 80 by Theorem \protect\ref%
{baumert}} \label{tab:80}
\begin{center}
\resizebox{\textwidth}{!}{
\begin{tabular}{||c|c|c|c|c|c||c||}
\hline $\mathcal{G}_{i}$ & $r_{A}$ & $r_{B}$ & $r_{C}$ & $r_{D}$ &
$\left\vert Aut(\mathcal{G}_{i})\right\vert $ & $I_{16}$ \\
\hline\hline $\mathcal{G}_{1}$ & $\left( 1110110000\right) $ &
$\left( 1101000001\right) $ & $\left( 1001111100\right) $ & $\left(
1001010001\right) $ & $2^{3}\times 5$ & $20039280$ \\ \hline
$\mathcal{G}_{2}$ & $\left( 1100101000\right) $ & $\left(
0100111110\right) $ & $\left( 1101000001\right) $ & $\left(
1110110000\right) $ & $2^{3}\times 5$ & $20248440$ \\ \hline
$\mathcal{G}_{3}$ & $\left( 0000100010\right) $ & $\left(
1001000000\right) $ & $\left( 1000001100\right) $ & $\left(
0111111110\right) $ & $2^{3}\times 5$ & $20306280$ \\ \hline
$\mathcal{G}_{4}$ & $\left( 0100000000\right) $ & $\left(
1000100011\right) $ & $\left( 1001100010\right) $ & $\left(
1111000011\right) $ & $2^{3}\times 5$ & $20342040$ \\ \hline
$\mathcal{G}_{5}$ & $\left( 0111110110\right) $ & $\left(
0101101000\right) $ & $\left( 1011010001\right) $ & $\left(
0001010100\right) $ & $2^{3}\times 5$ & $20457960$ \\ \hline
$\mathcal{G}_{6}$ & $\left( 1111000101\right) $ & $\left(
0100000101\right) $ & $\left( 1111001110\right) $ & $\left(
0110011111\right) $ & $2^{4}\times 3\times 5$ & $19992780$ \\ \hline
$\mathcal{G}_{7}$ & $\left( 1100111110\right) $ & $\left(
1101000001\right) $ & $\left( 0001100110\right) $ & $\left(
0100100110\right) $ & $2^{4}\times 3\times 5$ & $20008440$ \\ \hline
$\mathcal{G}_{8}$ & $\left( 1001100100\right) $ & $\left(
1001100001\right) $ & $\left( 1101011011\right) $ & $\left(
0010100011\right) $ & $2^{4}\times 3\times 5$ & $20082720$ \\ \hline
\end{tabular}}
\end{center}
\end{table}

Combining this with the last known number of such codes from
\cite{kayayildiz}, we obtain the following theorem:
\begin{theorem}
There exist at least $44$ inequivalent extremal Type II self-dual
codes of length $80$.
\end{theorem}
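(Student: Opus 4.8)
The plan is to establish the count by exhibiting $44$ pairwise inequivalent codes: the $8$ codes $\mathcal{G}_1,\dots,\mathcal{G}_8$ of Table \ref{tab:80} together with the $36$ extremal Type II codes of length $80$ already known from \cite{kayayildiz}. First I would confirm that each $\mathcal{G}_i$ is indeed an extremal Type II self-dual code. Self-duality is immediate: each $\mathcal{G}_i$ is produced by Theorem \ref{baumert} over $\mathcal{R}=\mathbb{F}_2$ from $\lambda$-circulant matrices $A,B,C,D$ whose first rows are recorded in Table \ref{tab:80}, and one checks directly that these rows satisfy conditions (\ref{baumerth}) and (\ref{baumerth2}), so the theorem guarantees that $\mathcal{G}_i$ is self-dual of length $8\cdot 10 = 80$ and dimension $40$. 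It then remains to verify that the minimum distance equals $16$ and that the code is doubly even (Type II); since the extremal Type II weight enumerator of length $80$ is uniquely determined as $1+97565y^{16}+12882688y^{20}+\cdots$, these two facts force the full weight enumerator and hence extremality.

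The second step is to separate the codes using the invariant $I_{16}$. For a given extremal Type II code of length $80$ the quantity $I_{16}$ counts the unordered pairs of weight-$16$ codewords at mutual Hamming distance $16$, and it is preserved by coordinate permutations; thus two codes with different $I_{16}$ values are inequivalent. I would compute $I_{16}$ for each $\mathcal{G}_i$ (the resulting values appear in the last column of Table \ref{tab:80}) and observe that the eight values are pairwise distinct, which immediately yields the pairwise inequivalence of $\mathcal{G}_1,\dots,\mathcal{G}_8$. Comparing these eight values against the corresponding invariants of the $36$ codes of \cite{kayayildiz} and finding them all distinct shows that the $\mathcal{G}_i$ are moreover inequivalent to every previously known code. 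Consequently the $36$ old codes and the $8$ new ones constitute $44$ mutually inequivalent codes, giving the stated lower bound.

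The substantive work, and the main obstacle, is the inequivalence verification rather than the self-duality, which is automatic from Theorem \ref{baumert}. Direct testing of code equivalence is computationally infeasible at these parameters, so the argument rests entirely on the invariant $I_{16}$; the delicate point is that $I_{16}$ is only a one-sided invariant, in that distinct values certify inequivalence while equal values are inconclusive, so the whole argument goes through precisely because all $44$ relevant values turn out to be different. Computing each $I_{16}$ requires enumerating the $\binom{97565}{2}$ pairs of weight-$16$ codewords, which is large but tractable, and the correctness of the lower bound hinges on carrying out this enumeration accurately for both the new and the previously published codes.
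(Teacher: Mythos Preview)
Your proposal is correct and follows essentially the same approach as the paper: the count $44$ is obtained by adjoining the eight codes $\mathcal{G}_1,\dots,\mathcal{G}_8$ of Table~\ref{tab:80} (whose self-duality comes from Theorem~\ref{baumert} and whose extremality and Type~II property are verified computationally) to the $36$ previously known codes of \cite{kayayildiz}, with pairwise inequivalence certified by the invariant $I_{16}$. Your write-up simply makes explicit the logical structure and computational caveats that the paper leaves implicit.
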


The codewords of weight $16$ in an extremal Type II code of length
$80$ form a $3$-design by Assmus-Matson theorem. Hence, we have the
following subsequent result:

\begin{theorem}
There are at least $44$ non-isomorphic $3-\left( 80,16,665\right) $
designs.
\end{theorem}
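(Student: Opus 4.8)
The plan is to exhibit the passage from codes to designs as a correspondence that is injective up to isomorphism, so that the $44$ inequivalent codes furnished by the previous theorem yield $44$ pairwise non-isomorphic designs. To this end I would fix, for each extremal Type II self-dual code $\mathcal{C}$ of length $80$, the incidence structure $\mathcal{D}(\mathcal{C})$ whose $80$ points are the coordinate positions and whose blocks are the supports of the $97565$ codewords of weight $16$. Since a binary vector is determined by its support, distinct weight-$16$ codewords have distinct supports, so blocks correspond bijectively to minimum-weight codewords; by the Assmus--Mattson theorem $\mathcal{D}(\mathcal{C})$ is a $3\text{-}(80,16,665)$ design. The easy half of the correspondence is immediate: an equivalence of binary codes is a coordinate permutation $\sigma$ with $\sigma(\mathcal{C})=\mathcal{C}'$ (over $\F_2$ there is no nontrivial scaling), and such a $\sigma$ carries weight-$16$ codewords to weight-$16$ codewords, hence blocks to blocks, so it is an isomorphism $\mathcal{D}(\mathcal{C})\to\mathcal{D}(\mathcal{C}')$.

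The substantive step is the converse, which is exactly the contrapositive of the statement I want: an isomorphism of designs must come from an equivalence of codes. I would argue as follows. If $\sigma$ is a point-permutation taking the blocks of $\mathcal{D}(\mathcal{C})$ onto the blocks of $\mathcal{D}(\mathcal{C}')$, then, reading each support back as a binary vector, $\sigma$ sends the set of weight-$16$ codewords of $\mathcal{C}$ bijectively onto that of $\mathcal{C}'$. The key structural input is that an extremal Type II self-dual code of length $80$ is spanned by its minimum-weight codewords; granting this, $\sigma(\mathcal{C})=\sigma\big(\langle\,\text{weight-}16\text{ words of }\mathcal{C}\,\rangle\big)=\langle\,\text{weight-}16\text{ words of }\mathcal{C}'\,\rangle=\mathcal{C}'$, so the two codes are equivalent. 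Hence inequivalent codes force non-isomorphic designs, and the preceding theorem produces at least $44$ non-isomorphic $3\text{-}(80,16,665)$ designs.

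The main obstacle is precisely this spanning lemma: it is not part of the preceding material and must be supplied or cited, holding for extremal doubly-even self-dual codes of this length. Everything else is bookkeeping with supports.

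I would also record a more self-contained route that sidesteps the lemma whenever the codes are separated by the invariant $I_{16}$ already used in the paper. For weight-$16$ codewords $c,c'$ with supports $B,B'$ one has $d(c,c')=|B\triangle B'|=32-2\,|B\cap B'|$, so $d(c,c')=16$ if and only if $|B\cap B'|=8$. Consequently $I_{16}$ equals the number of unordered pairs of blocks meeting in exactly $8$ points, a quantity manifestly unchanged by any relabelling of the $80$ points; thus $I_{16}$ is itself an isomorphism invariant of $\mathcal{D}(\mathcal{C})$, and the distinct $I_{16}$-values that distinguish the codes distinguish the designs directly.
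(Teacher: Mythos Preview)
Your proposal is correct and considerably more complete than what the paper itself offers. The paper's entire justification is the single sentence preceding the theorem: the weight-$16$ codewords form a $3$-design by Assmus--Mattson, and ``hence'' the result follows from the $44$ inequivalent codes of the preceding theorem. The passage from inequivalent codes to non-isomorphic designs is simply left implicit.

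You supply this missing step twice. Your first route, via the fact that an extremal doubly-even self-dual code is generated by its minimum-weight words, is the standard general argument and is correct, though as you note it imports a lemma not proved in the paper. Your second route actually meshes perfectly with the paper's own logic and is not merely a partial fallback: the $44$ codes in question are distinguished precisely by their $I_{16}$-values (this is how inequivalence is certified both for the eight new codes in the table and for the earlier ones), and your observation that $I_{16}$ equals the number of block-pairs meeting in exactly $8$ points promotes $I_{16}$ directly to an isomorphism invariant of the design. So in this specific setting the $I_{16}$ argument already handles all $44$ codes without any appeal to the spanning lemma, and it is arguably the argument the paper ought to have made explicit.
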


\subsection{New extremal binary self-dual codes of length 68 from $\mathbb{F}%
_{2}+u\mathbb{F}_{2}$-extensions}

In \cite{buyuklieva}, possible weight enumerators of an extremal
binary self-dual code of length $68$ (of parameters
$\left[68,34,12\right]$) are characterized as follows:
\begin{eqnarray*}
W_{68,1} &=&1+\left( 442+4\beta \right) y^{12}+\left( 10864-8\beta
\right)
y^{14}+\cdots , \\
W_{68,2} &=&1+\left( 442+4\beta \right) y^{12}+\left( 14960-8\beta
-256\gamma \right) y^{14}+\cdots ,
\end{eqnarray*}%
where $0\leq \gamma \leq 9$ by \cite{harada}. The existence of codes
is known for various parameters for $W_{68,1}$; for a list of known
such codes we
refer to \cite{tufekci}. The existence of codes is known for $%
\gamma=0,1,2,3,4,5,6$ and $\gamma=7$ in $W_{68,2}$. Recently, the
first examples of codes with $\gamma=5$ were constructed in
\cite{gildea}. Yankov et al. constructed codes with $\gamma=7$ by
considering codes with an automorphism group of order $7$ in
\cite{yankov18}.

We construct extremal binary self-dual codes of length $68$ with new
weight enumerators via the building-up construction over
$\mathbb{F}_{2}+u\mathbb{F}_{2}$ applied to the codes of length 64
obtained at the beginning of Section 4. We obtain examples of
extremal binary self-dual codes of length $68$ with weight
enumerator $\gamma =5$ in $W_{68,2}$.

In the sequel, let $R$ be a commutative ring of characteristic $2$
with identity.

\begin{theorem}
\label{extensionA}$($\cite{frobenius}$)$ Let $\mathcal{C}$ be a
self-dual code over $R$ of length $n$ and $G=(r_{i})$ be a $k\times
n$ generator matrix for $C$, where $r_{i}$ is the $i$-th row of $G$,
$1\leq i\leq k$. Let $c$ be a unit in $R$ such that $c^{2}=1$ and
$X$ be a vector in $R^{n}$ with $\left\langle X,X\right\rangle =1$.
Let $y_{i}=\left\langle
r_{i},X\right\rangle $ for $1\leq i\leq k$. Then the following matrix%
\begin{equation*}
\left[
\begin{array}{cc|c}
1 & 0 & X \\ \hline
y_{1} & cy_{1} & r_{1} \\
\vdots & \vdots & \vdots \\
y_{k} & cy_{k} & r_{k}%
\end{array}%
\right] ,
\end{equation*}%
generates a self-dual code $\mathcal{D}$ over $R$ of length $n+2$.
\end{theorem}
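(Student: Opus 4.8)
The plan is to establish the two defining properties of a self-dual code for $\mathcal{D}$: first that $\mathcal{D}$ is self-orthogonal, i.e.\ $\mathcal{D}\subseteq\mathcal{D}^{\perp}$, and second that $\mathcal{D}$ has exactly the size forced by self-duality, namely $|\mathcal{D}|=|R|^{(n+2)/2}$. Because $R$ is finite and the size relation $|\mathcal{E}|\,|\mathcal{E}^{\perp}|=|R|^{m}$ holds for every code $\mathcal{E}$ of length $m$ over $R$, once both facts are in hand we get $|\mathcal{D}|=|\mathcal{D}^{\perp}|$, which combined with $\mathcal{D}\subseteq\mathcal{D}^{\perp}$ forces $\mathcal{D}=\mathcal{D}^{\perp}$.

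For self-orthogonality I would denote the top row of the generator by $w_{0}=(1,0,X)$ and the remaining rows by $w_{i}=(y_{i},cy_{i},r_{i})$ for $1\le i\le k$, and compute the three types of inner product directly. For $\langle w_{0},w_{0}\rangle$ I use $\langle X,X\rangle=1$ to obtain $1+0+1=0$ in characteristic $2$. For $\langle w_{0},w_{i}\rangle$ I use the symmetry of the form together with $y_{i}=\langle r_{i},X\rangle$ to obtain $y_{i}+\langle X,r_{i}\rangle=y_{i}+y_{i}=0$. For $\langle w_{i},w_{j}\rangle$ (the case $i=j$ included) I use $c^{2}=1$ and the self-orthogonality of $\mathcal{C}$ to obtain $y_{i}y_{j}+c^{2}y_{i}y_{j}+\langle r_{i},r_{j}\rangle=2y_{i}y_{j}+0=0$. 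Characteristic $2$ is precisely what makes every cross term double up and vanish.

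The delicate step, where I expect the only genuine work to lie, is the size count. I would consider the $R$-linear map $\mu:R^{k+1}\to R^{n+2}$ sending $(a_{0},a_{1},\dots,a_{k})$ to $a_{0}w_{0}+\sum_{i}a_{i}w_{i}$, whose image is $\mathcal{D}$, so that $|\mathcal{D}|=|R|^{k+1}/|\ker\mu|$. Writing out the coordinates, such a combination lies in $\ker\mu$ exactly when $a_{0}+\sum a_{i}y_{i}=0$, $c\sum a_{i}y_{i}=0$, and $a_{0}X+\sum a_{i}r_{i}=0$. Since $c$ is a unit the middle equation gives $\sum a_{i}y_{i}=0$, hence $a_{0}=0$ and then $\sum a_{i}r_{i}=0$. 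The crucial observation is the converse: if $\sum a_{i}r_{i}=0$ then $\sum a_{i}y_{i}=\langle\sum a_{i}r_{i},X\rangle=0$ automatically, so that $\ker\mu=\{0\}\times\ker\nu$, where $\nu:R^{k}\to\mathcal{C}$ is the representation map $(a_{1},\dots,a_{k})\mapsto\sum a_{i}r_{i}$.

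It follows that $|\ker\mu|=|\ker\nu|$ and hence $|\mathcal{D}|=|R|\cdot(|R|^{k}/|\ker\nu|)=|R|\cdot|\mathcal{C}|$. Since $\mathcal{C}$ is self-dual over $R$ we have $|\mathcal{C}|=|R|^{n/2}$, so $|\mathcal{D}|=|R|^{(n+2)/2}$, which closes the argument as outlined in the first paragraph. The main obstacle is exactly this kernel identification: one must verify that the extra generator $w_{0}$ contributes a genuine factor of $|R|$ through the free coordinate $a_{0}$ while introducing no new relations, and this hinges entirely on the definition $y_{i}=\langle r_{i},X\rangle$ and on the invertibility of $c$.
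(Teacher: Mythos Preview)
The paper does not supply its own proof of this theorem: it is quoted verbatim from \cite{frobenius} and used as a black box for the subsequent computer searches, so there is nothing in the text to compare your argument against line by line.

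That said, your argument is correct and is essentially the standard one. The orthogonality checks are exactly as they must be, and your kernel analysis for the size count is clean: the key point, which you isolate correctly, is that the second coordinate $c\sum a_iy_i$ together with the invertibility of $c$ forces $\sum a_iy_i=0$, whence $a_0=0$, and then the relation $y_i=\langle r_i,X\rangle$ guarantees that no new dependencies are introduced beyond those already present among the $r_i$. One small caveat worth making explicit: you invoke the identity $|\mathcal{E}|\,|\mathcal{E}^{\perp}|=|R|^{m}$ for an arbitrary linear code $\mathcal{E}$ over $R$. This does not hold for a general finite commutative ring of characteristic $2$; it requires $R$ to be Frobenius. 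The paper's local hypothesis (``commutative ring of characteristic $2$ with identity'') is sloppier than the source it cites, whose title already signals the Frobenius setting, and all the rings actually used in the paper ($\mathbb{F}_2$, $\mathbb{F}_2+u\mathbb{F}_2$, $\mathbb{F}_4+u\mathbb{F}_4$) are Frobenius. So your appeal to that identity is legitimate in context, but you should flag the Frobenius assumption rather than treat it as automatic.
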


Currently, the existence of codes with weight enumerators for
$\gamma =0,1,2,3,4,6$ and $7$ is known. Recently, new codes in
$W_{68,2}$ have been
obtained in \cite{yankov18,gildea}. These codes exist for $\gamma =3,4$ and $%
5$ in $W_{68,2}$ when

$%
\begin{array}{l}
\gamma =3,\ \beta \in \{2m+1|m=38,40,43,44,47,\dots ,77,79,80,81,83,89,96\}%
\text{ or} \\
\beta \in \{2m|m=39,\dots ,92,94,95,97,98,101,102\}; \\
\gamma =4,\ \beta
=103,105,107,113,115,117,119,121,129,139,141,143,145,149,157,161\
\text{or}
\\
\beta \in \{2m|m=43,46,47,48,49,51,52,54,55,56,58,60,\dots
,90,92,97,98\};
\\
\gamma =5\ \text{with}\ \beta \in \{m|m=158,\ldots ,169\}%
\end{array}%
$

We obtain $46$ new codes with weight enumerators for $\gamma =3$ and
$\beta
=91$ ; $\gamma =4$ and $\beta =$90, 106, 109, 112, 114; $\gamma =5$ and $%
\beta =$113, 116,...,153 in $W_{68,2}$ using Theorem
\ref{extensionA} and neighboring constructions.

The following table contains the new extremal binary self-dual codes
of length $68$ obtained from applying Theorem \ref{extensionA} for
$R = \F_2+u\F_2$ over $\varphi_{\F_4+u\F_4}(\mathcal{D}_{16})$:

\begin{table}[H]
\caption{New extremal binary self-dual codes in $W_{68,2}$ as extensions of $%
\mathcal{D}_{16}$ (5 codes)} \label{tab:68table1}
\begin{center}
\begin{tabular}{||c|c|c||c||c||}
\hline $\mathcal{C}_{68,i}$ &   $c$ & $X$ & $\gamma $ & $\beta $ \\
\hline $\mathcal{C}_{68,1}$ &  $1+u$ & $\left(
1131113u3u0110103uuuuu03u03u1031\right) $ & $3$ & $91$ \\ \hline
$\mathcal{C}_{68,2}$  & $1+u$ & $\left(
30u3001113111113313100u11uu10011\right) $ & $4$ & $90$ \\ \hline
$\mathcal{C}_{68,3}$ &  $1+u$ & $\left(
3131331u3u013030300u0u030u103013\right) $ & $5$ & $119$ \\ \hline
$\mathcal{C}_{68,4}$ &  $1+u$ & $\left(
3131331u3u013030300u0u030u103013\right) $ & $5$ & $129$ \\ \hline
$\mathcal{C}_{68,5}$ &  $1+u$ & $\left(
3111333u3u03303u30uuu001003u3031\right) $ & $5$ & $137$ \\ \hline
\end{tabular}%
\end{center}
\end{table}

\subsection{New extremal binary self-dual codes of length 68 via neighboring construction}

Two binary self-dual codes of length $2k$ are said to be neighbors
if their intersection has dimension $k-1$. Let $C$ be a binary
self-dual code of length $2k$ and $x\in {\mathbb{F}}_{2}^{2k}-C$.
Then $D=\left\langle \left\langle x\right\rangle ^{\bot }\cap
C,x\right\rangle $ is a neighbor of $C$. We consider the neighbors
of the codes in Table \ref{tab:68table1} \ and
obtain new codes with $\gamma =4$ and $5$ which are listed in Table \ref%
{tab:68table3} and \ref{tab:68table2}, respectively. The generator
matrix of $C$ is formed into standard form which allows us to fix
first $34$ entries of $x$ as $0$ without loss of generality. The
remaining $34$ entries of $x$ are given in the corresponding tables.
\clearpage

\begin{table}[H]
\caption{New extremal binary self-dual codes of length $68$ with
$\protect\gamma =5$ (36 codes)} \label{tab:68table2}
\begin{center}
\begin{tabular}{||c|c|c||c||}
\hline $\mathcal{N}_{68,i}$ & $\mathcal{C}_{68,i}$ & $x$ & $\beta $
\\ \hline $\mathcal{N}_{68,1}$ & $\mathcal{C}_{68,3}$ & $\left(
0111010001010001111010111111011111\right) $ & 113 \\ \hline
$\mathcal{N}_{68,2}$ & $\mathcal{C}_{68,2}$ & $\left(
1101100010000010001110010101100001\right) $ & 116 \\ \hline
$\mathcal{N}_{68,3}$ & $\mathcal{C}_{68,3}$ & $\left(
1011100111101100111110001010011010\right) $ & 117 \\ \hline
$\mathcal{N}_{68,4}$ & $\mathcal{C}_{68,3}$ & $\left(
0110001010110000000010011000100101\right) $ & 118 \\ \hline
$\mathcal{N}_{68,5}$ & $\mathcal{C}_{68,3}$ & $\left(
0111000001111000010100000110010011\right) $ & 120 \\ \hline
$\mathcal{N}_{68,6}$ & $\mathcal{C}_{68,3}$ & $\left(
1000100011100010010100111011100011\right) $ & 121 \\ \hline
$\mathcal{N}_{68,7}$ & $\mathcal{C}_{68,4}$ & $\left(
1100110100011001110000100111100001\right) $ & 122 \\ \hline
$\mathcal{N}_{68,8}$ & $\mathcal{C}_{68,4}$ & $\left(
0001010111110011001110000100101010\right) $ & 123 \\ \hline
$\mathcal{N}_{68,9}$ & $\mathcal{C}_{68,3}$ & $\left(
0001101011000101010111000010000011\right) $ & 124 \\ \hline
$\mathcal{N}_{68,10}$ & $\mathcal{C}_{68,3}$ & $\left(
1111001110010000001111100101010011\right) $ & 125 \\ \hline
$\mathcal{N}_{68,11}$ & $\mathcal{C}_{68,3}$ & $\left(
1010001010101011110010110010001001\right) $ & 126 \\ \hline
$\mathcal{N}_{68,12}$ & $\mathcal{C}_{68,3}$ & $\left(
0000111001101111101111110011101101\right) $ & 127 \\ \hline
$\mathcal{N}_{68,13}$ & $\mathcal{C}_{68,3}$ & $\left(
1101011011011001011000101110111111\right) $ & 128 \\ \hline
$\mathcal{N}_{68,14}$ & $\mathcal{C}_{68,3}$ & $\left(
0011000010101011110000000010000011\right) $ & 130 \\ \hline
$\mathcal{N}_{68,15}$ & $\mathcal{C}_{68,4}$ & $\left(
1111100111001100010111110000111010\right) $ & 131 \\ \hline
$\mathcal{N}_{68,16}$ & $\mathcal{C}_{68,5}$ & $\left(
1001011001100011100100000000111100\right) $ & 132 \\ \hline
$\mathcal{N}_{68,17}$ & $\mathcal{C}_{68,3}$ & $\left(
1001011111011011100010001111100101\right) $ & 133 \\ \hline
$\mathcal{N}_{68,18}$ & $\mathcal{C}_{68,4}$ & $\left(
1110110100001001010111000011101000\right) $ & 134 \\ \hline
$\mathcal{N}_{68,19}$ & $\mathcal{C}_{68,4}$ & $\left(
0010011100110101100100001111000000\right) $ & 135 \\ \hline
$\mathcal{N}_{68,20}$ & $\mathcal{C}_{68,4}$ & $\left(
1101010010010000110000110000110101\right) $ & 136 \\ \hline
$\mathcal{N}_{68,21}$ & $\mathcal{C}_{68,5}$ & $\left(
0101100110111011001111011110100010\right) $ & 138 \\ \hline
$\mathcal{N}_{68,22}$ & $\mathcal{C}_{68,5}$ & $\left(
1000100110111001011111101100110110\right) $ & 139 \\ \hline
$\mathcal{N}_{68,23}$ & $\mathcal{C}_{68,3}$ & $\left(
0010100001010011000011111001111111\right) $ & 140 \\ \hline
$\mathcal{N}_{68,24}$ & $\mathcal{C}_{68,5}$ & $\left(
0011110011111010011001011100101011\right) $ & 141 \\ \hline
$\mathcal{N}_{68,25}$ & $\mathcal{C}_{68,5}$ & $\left(
1010101000100011110111000100001110\right) $ & 142 \\ \hline
$\mathcal{N}_{68,26}$ & $\mathcal{C}_{68,4}$ & $\left(
0010110101000101000101111111100011\right) $ & 143 \\ \hline
$\mathcal{N}_{68,27}$ & $\mathcal{C}_{68,4}$ & $\left(
1110111100100100111011100100110110\right) $ & 144 \\ \hline
$\mathcal{N}_{68,28}$ & $\mathcal{C}_{68,5}$ & $\left(
0011010101101001011101010100100001\right) $ & 145 \\ \hline
$\mathcal{N}_{68,29}$ & $\mathcal{C}_{68,5}$ & $\left(
1010111001011110111110010011100010\right) $ & 146 \\ \hline
$\mathcal{N}_{68,30}$ & $\mathcal{C}_{68,5}$ & $\left(
1001011001101010010001110001001011\right) $ & 147 \\ \hline
$\mathcal{N}_{68,31}$ & $\mathcal{C}_{68,5}$ & $\left(
1011001011101110011101100011101100\right) $ & 148 \\ \hline
$\mathcal{N}_{68,32}$ & $\mathcal{C}_{68,5}$ & $\left(
0010111111000000111101001111111001\right) $ & 149 \\ \hline
$\mathcal{N}_{68,33}$ & $\mathcal{C}_{68,5}$ & $\left(
1000110101111111111001100010011111\right) $ & 150 \\ \hline
$\mathcal{N}_{68,34}$ & $\mathcal{C}_{68,5}$ & $\left(
0010010000111010111010010001000000\right) $ & 151 \\ \hline
$\mathcal{N}_{68,35}$ & $\mathcal{C}_{68,5}$ & $\left(
1010010001000111011010000111101001\right) $ & 152 \\ \hline
$\mathcal{N}_{68,36}$ & $\mathcal{C}_{68,5}$ & $\left(
1011010110010100101110101110001100\right) $ & 153 \\ \hline
\end{tabular}%
\end{center}
\end{table}

\begin{table}[H]
\caption{New extremal binary self-dual codes of length 68 with
$\protect\gamma =4$ (5 codes)} \label{tab:68table3}
\begin{center}
\begin{tabular}{||c|c|c||c||}
\hline $\mathcal{N}_{68,i}$ & $\mathcal{C}_{68,i}$ & $x$ & $\beta $
\\ \hline $\mathcal{N}_{68,37}$ & $\mathcal{C}_{68,4}$ & $\left(
0011001111001110101000101111110110\right) $ & 106 \\ \hline
$\mathcal{N}_{68,38}$ & $\mathcal{C}_{68,2}$ & $\left(
1110011010010101100110101000100100\right) $ & 107 \\ \hline
$\mathcal{N}_{68,39}$ & $\mathcal{C}_{68,4}$ & $\left(
0110001101100010100111011010000110\right) $ & 109 \\ \hline
$\mathcal{N}_{68,40}$ & $\mathcal{C}_{68,3}$ & $\left(
1010000001011010101100001001100000\right) $ & 112 \\ \hline
$\mathcal{N}_{68,41}$ & $\mathcal{C}_{68,3}$ & $\left(
0110010111010000001000111001101110\right) $ & 114 \\ \hline
\end{tabular}%
\end{center}
\end{table}

\begin{remark}
We construct $39$ new codes with the rare parameter for
$\boldsymbol{\gamma =5}$ in $W_{68,2}$. Together with these, the
existence of codes with weight enumerator $\boldsymbol{\gamma =5}$
in $W_{68,2}$ is known for $51$ different $\beta $ values.
\end{remark}

\section{Conclusion}
The special structure of the matrices of Baumert-Hall arrays and
more generally orthogonal designs provide a strong link between
discrete structures and self-dual codes. The reduced search field is
instrumental in finding extremal self-dual codes. As has been
demonstrated in the paper, these constructions can be combined with
other methods in the literature such as extensions, search over
rings and neighbors. We have been able to find a substantial number
of new extremal binary self-dual codes using these techniques, thus
filling many gaps in the literature of such codes. Using the
Assmus-Mattson theorem we were also able to come up with new
designs, establishing a key link between self-dual codes and
designs.

The effectiveness of our methods indicate that they can be applied
in different settings as well. We envision two possible directions
for future research. One is to apply the ideas and methods to
different lengths than we have considered. However, it should be
noted that higher lengths would require higher computational power
and so the complexity might become an issue. Another possible idea
is to apply these constructions to other rings than the ones we have
considered.

\end{document}